\documentclass[letterpaper,10pt]{amsart}

\usepackage[utf8]{inputenc} 
\usepackage[T1]{fontenc}
\usepackage{url}
\usepackage[margin=3cm]{geometry}
\usepackage{parskip}
\usepackage[figuresright]{rotating} 

\usepackage{amssymb} 
\usepackage{amsmath}
\usepackage{amsthm} 
\usepackage{mathabx} 
\usepackage{mathtools}
\usepackage{hyperref}

\allowdisplaybreaks 

\usepackage{enumerate}
\usepackage[shortlabels]{enumitem}

\newcommand{\Z}{\mathbb{Z}}
\newcommand{\inv}{^{-1}}
\newcommand{\del}{\partial}
\newcommand{\sys}{\operatorname{sys}}

\newtheorem{theor}{Theorem} 
\newtheorem{thm}{Theorem}[section]
\newtheorem{prop}[thm]{Proposition}

\newtheorem{lemma}[thm]{Lemma}

\newtheorem*{thm*}{Theorem}
\newtheorem*{prop*}{Proposition}
\newtheorem*{cor*}{Corollary}

\theoremstyle{definition}

\theoremstyle{remark}

\title{On the number of shared Dehn surgeries between two knots}
\author{Patricia Sorya}
\address{University of Ottawa, Department of Mathematics and Statistics}
\email{psorya@uottawa.ca}

\thanks{This work was supported by the FRQ doctoral research grant 305903 and by the NSERC postdoctoral fellowship 598649}

\date{\today}

\begin{document}

\begin{abstract}
A folklore theorem states that for any pair of distinct knots in $S^3$, performing $p/q$-Dehn surgery on each knot yields orientation-preservingly homeomorphic manifolds for at most finitely many slopes $p/q$. In this paper, we provide a proof based on the JSJ decomposition of knot exteriors. In particular, for any given pair of distinct knots, it provides an effective bound on the maximal number of shared surgeries between the knots.
\end{abstract}
\maketitle

The $p/q$-\emph{Dehn surgery} along a knot $K$ in $S^3$, denoted by $S^3_K(p/q)$, is obtained by gluing a solid torus $S^1 \times D^2$ to the exterior $S^3_K$ of $K$ in $S^3$. The parameter $p/q$ is called a \emph{slope} and specifies the curve $p\mu+q\lambda \subset \del S^3_K$ to which a meridional disc of $S^1 \times D^2$ is identified, where $\mu$ and $\lambda$ are the meridian and longitude of $K$ respectively.

A folklore theorem \cite{bkm, kp}, states that given two distinct knots $K$ and $K'$, the manifolds $S^3_K(p/q)$ and $S^3_{K'}(p/q)$ are orientation-preservingly homeomorphic for at most finitely many slopes $p/q$. To the best of our knowledge, this statement is unattributed and no complete proof is known. In this paper, we provide a proof based on the JSJ decomposition of knot exteriors. By combining ideas from previous work \cite{sorya, sw}, we obtain an effective upper bound on the number of slopes for which an homeomorphism $S^3_K(p/q) \cong S^3_{K'}(p/q)$ can occur.

\begin{theor}
	\label{thm:main} 
	Let $K$ and $K'$ be knots in $S^3$. 
	There are at most $D(K,K')$ slopes $p/q$ such that $S^3_K(p/q) \cong S^3_{K'}(p/q)$,
	where $D(K,K')$ is specified in Table \ref{table:D}.
\end{theor}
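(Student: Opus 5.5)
We prove Theorem \ref{thm:main} by a case analysis on the geometric type of $S^3_K$ given by its JSJ decomposition: $K$ is either a torus knot, a hyperbolic knot, or a satellite knot. In the satellite case the root JSJ piece $X$ containing $\partial S^3_K$ is Seifert fibered (a composing space or a cable space) or hyperbolic. We treat $K'$ in the same way. Throughout we assume $K \neq K'$; otherwise there is nothing to prove, so $D(K,K)=\infty$ in Table \ref{table:D}. For each pair of types we bound the number of slopes $p/q$ with $S^3_K(p/q)\cong S^3_{K'}(p/q)$, and $D(K,K')$ records these bounds.

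\textbf{Step 1: Reduce to exceptional behaviour of the root piece.} For all but finitely many slopes, the JSJ decomposition of $S^3_K(p/q)$ is obtained from that of $S^3_K$ by Dehn filling the root piece $X$, and the filled root piece $X(p/q)$ is a JSJ piece of the same geometric type.
\begin{itemize}
\item If $X$ is hyperbolic, we exclude the exceptional slopes. By Lackenby--Meyerhoff there are at most $10$ of them.
\item If $X$ is Seifert fibered, we exclude the slopes at distance at most $1$ from the fibre slope; for $S^3$ knot exteriors these are explicitly controlled.
\end{itemize}
A homeomorphism $S^3_K(p/q)\cong S^3_{K'}(p/q)$ then matches JSJ pieces. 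So either the exteriors of the companions agree, i.e.\ the non-root pieces coincide, or the root fillings match against pieces of $K'$. For satellites we argue by induction on the JSJ depth, which is the approach of \cite{sorya}. If the companion collections of $K$ and $K'$ differ, only finitely many slopes can work, and the count comes from the exceptional slopes plus slopes where the filled root piece becomes homeomorphic to a companion piece. That happens for boundedly many slopes, because the volume or the Seifert invariants change with the slope.

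\textbf{Step 2: The root pieces agree in type and the non-root parts are equal.} Suppose $X(p/q)\cong X'(p/q)$ for infinitely many slopes.
\begin{itemize}
\item \emph{Hyperbolic roots.} Thurston's hyperbolic Dehn surgery theorem makes $X(p/q)$ converge geometrically to $X$. Mostow rigidity and the identification of the short cusp geodesics then give $X\cong X'$ preserving the filled slope. Since the filling is by the same slope $p/q$, the homeomorphism preserves the meridian. Gordon--Luecke, applied to the whole exterior, then forces $K=K'$. Effective bounds come from volume and the length of the surgery core, as in \cite{sw}: for $p/q$ long, the core is the unique shortest geodesic, so any homeomorphism sends core to core.
\item \emph{Seifert fibered roots} (cable spaces, torus knots, composing spaces). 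Read off the Seifert invariants of $X(p/q)$. These depend on $p/q$ through explicit formulae, such as $|pq_0\cdot rs - p|$-type exceptional fibres for cables and torus knots. Equating the invariants for $K$ and $K'$ gives Diophantine equations with finitely many solutions unless the parameters coincide. Orientation matters here; for example $T_{r,s}$ and its mirror are distinguished by the sign of the Euler number.
\end{itemize}

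\textbf{Main obstacle.} The hardest step is the hyperbolic-versus-hyperbolic root case with an \emph{effective} count. Mere finiteness follows from Thurston, but to get a number we need the framework of \cite{sw}. The plan is to use effective Dehn filling, by Hodgson--Kerckhoff or Futer--Purcell--Schleimer: for normalized length beyond a universal constant, the core geodesic is shortest and the filling is determined. This bounds the bad slopes by the number of slopes of length below the threshold on a maximal cusp, which is controlled by the cusp area and hence by the volume of $X$ and $X'$. We would then sum the exceptional counts from Steps 1 and 2 over all type combinations to fill in Table \ref{table:D}.
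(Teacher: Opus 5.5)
Your hyperbolic-root argument (for long slopes the surgery core is the unique short geodesic, so $f$ carries core to core and Gordon--Luecke gives $K=K'$) is essentially the paper's Lemma \ref{lemma:wakelinQ}. The gap is in Step 1, and it sits exactly where most of the theorem's content lies.

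When the root piece $X$ is Seifert fibred, the slopes at distance at most one from the fibre slope form an infinite family, not a finite one:
\begin{itemize}
\item for $T_{a,b}$ they are all slopes with $|p-qab|\le 1$, the lens-space slopes;
\item for an $(r,s)$-cable space they are all $p/q$ with $|p-qrs|\le 1$;
\item for a composing space the fibre on $\partial S^3_K$ is the meridian, so every integer slope $p/1$ is at distance one.
\end{itemize}
So the claim that ``for all but finitely many slopes'' the JSJ decomposition of $S^3_K(p/q)$ is $X(p/q)$ plus the old pieces is false. These families cannot be discarded; they must be counted:
\begin{itemize}
\item Two torus knots with $ab=a'b'$ both have lens-space surgeries along infinitely many common slopes. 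Comparing these requires \cite[Lemma 4.2]{mccoy}.
\item The family $|p-qrs|=1$ is where the cable space fills to a solid torus and the surgery descends to $p/qs^2$-surgery on the companion. This is the second alternative of Proposition \ref{prop:jsjsurgered}, which the paper analyses separately according to the type of $J$.
\item By \cite[Theorem 2]{sorya}, a composite knot can share a $p/q$-surgery with another knot only when $q=1$, i.e.\ only along the family you excluded.
\end{itemize}

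In the composite case your finiteness mechanism (``the volume or the Seifert invariants change with the slope'') also gives nothing. For $q=1$ the filled composing space is $T^2\times I$, or a composing space with one fewer boundary component, independently of $p$. No new exceptional fibre or short geodesic appears, and the JSJ pieces of $S^3_K(p/1)$ are just those of the summand exteriors. The only $p$-dependence is in the gluing map $\mu_1\mapsto\mu_2$, $\lambda_1\mapsto -p\mu_2-\lambda_2$ of Lemma \ref{lemma:compositegluing}.

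The missing idea is to analyse this gluing. Suppose $f(T)$ is not one of the tori $\partial S^3_{J_i'}$. Then the root piece of $S^3_{K_2}$ is one of the following:
\begin{itemize}
\item a cable space, where comparing fibre slopes forces $s=-1$;
\item a composing space, which reduces to the other cases;
\item hyperbolic, where Mostow--Prasad rigidity forces $l(-p\mu_2-\lambda_2)=L$ for a length $L$ fixed by $K'$. This is a quadratic equation in $p$ with at most two roots per hyperbolic-type summand, which is where the composite/composite entry of Table \ref{table:D} comes from.
\end{itemize}

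Finally, even where your sketch does give finiteness, ``summing exceptional counts'' does not produce the specific entries of Table \ref{table:D}:
\begin{itemize}
\item the $0$ for torus versus composite comes from atoroidality of $S^3_K(p/1)$ against the incompressible torus in $S^3_{K'}(p/1)$;
\item the $3$ for torus versus cable uses \cite[Proposition 1.5]{mccoy} together with the reducible-slope argument;
\item the cable versus cable counts rest on \cite[Propositions 3.5, 6.7]{sorya}, not on generic Diophantine finiteness.
\end{itemize}
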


When at least one of $K$ and $K'$ is a prime knot, the construction of $D(K,K')$ relies on the topology of JSJ pieces appearing in the exterior of the knots and their fillings. In some cases, we find a precise set of slopes that may yield shared surgeries. In other cases, we find numbers $C$ and $B$ such that $S^3_K(p/q) \cong S^3_{K'}(p/q)$ only if $|q|\leq C$ and $|p| \leq B$. We then take $D(K,K')$ to be the number
	\[n(B,C) = \# \big\{ (p,q) \in \Z_{\geq 0} \times \Z\setminus\{0\} \;\big|\; \gcd(p,q)=1,  p\leq B,  |q| \leq C \big\}\]

Note that such a constant $C$ can be obtained from \cite[Theorem 1.6]{sw} or \cite[Definition A.2]{bkm}. In the present paper, we rely on the data of exceptional Seifert fibres and hyperbolic geodesics created by the surgery operation as in \cite[Theorem 1.6]{sw}, but we use specific geometric data instead of universal bounds. This strategy provides a slight refinement for $C$ and also has the advantage of being applicable to the construction of the constant $B$.

As an example, \cite[Theorem 0.1]{kp} exhibits hyperbolic knots $K_2$ and $K_2'$ that share $p/1$-Dehn surgeries for $p \in \{3,4,5,6\}$. Our bound gives $D(K_2,K_2') = n(7, 20)=147$. Its construction produces a finite list of slopes $p/q$ which can be computationally tested for yielding orientation-preservingly homeomorphic $p/q$-Dehn surgeries. It turns out that the only slopes with this property are precisely the 4 slopes from \cite[Theorem 0.1]{kp}\footnote{Communicated by Marc Kegel.}.

In the case when both $K$ and $K'$ are composite knots, for which only integer slopes may yield homeomorphic surgeries \cite[Theorem 2]{sorya}, no new exceptional Seifert fibre nor hyperbolic geodesic is introduced by the surgery operation. Thus, a different strategy is adopted: we analyze the gluing map prescribed by the surgery slope to obtain a bound $D(K,K')$ that depends on the number of summands of hyperbolic type.

\subsection{Organization}
We first gather some useful lemmas in the Section \ref{sec:pre}. We then study each possible combination of torus knot, hyperbolic type knot, cable knot and composite knot for $K$ and $K'$. The section corresponding to each case is indicated in Table \ref{table:org}.

\begin{table}[h!]
	\begin{tabular}{l|llll}
		$K$ \textbackslash{} $K'$ & Torus & Hyperbolic type & Cable & Composing \\ \hline
		Torus                     &  \ref{tt}   & \ref{th} & \ref{tc} &  \ref{tp}   \\
		Hyperbolic type               &       &  \ref{hh}   & \ref{hc}  &  \ref{hp}    \\
		Cable                     &       &            & \ref{cc}  &  \ref{cp}    \\
		Composing                 &       &            &       &     \ref{pp}
	\end{tabular}\caption{Section for each case}\label{table:org}
\end{table}

\subsection{Acknowledgements}
I thank Giacomo Bascapè, Steven Boyer, Duncan McCoy and Laura Wakelin for interesting discussions, Marc Kegel for pointing out the possibility of computationally testing candidate slopes, and Gary Dunkerley for asking me if I knew a proof of this folklore theorem.

\section{Preliminaries}\label{sec:pre}
We refer the reader to \cite[Section 2]{sw} for a summary of the structure of satellite knots, the JSJ decomposition of knot exteriors, hyperbolic geometry in knot exteriors, as well as for the terminology and notational conventions used in this paper.

We recall a useful geometric lemma for hyperbolic link exteriors.

\begin{lemma}\label{lemma:length}
	Let $\sigma=a/b$ be a slope on a boundary component of the exterior of a link in $S^3$, expressed in the basis given by the meridian $\mu$ and the longitude $\lambda$ of the corresponding link component. We have
	\begin{enumerate}[(i)]
		 \item $\hat{l}(\sigma)\hat{l}(\lambda) \geq \Delta(\sigma, \mu) = |a|$,
		 \item $\hat{l}(\sigma)\hat{l}(\mu) \geq \Delta(\sigma, \mu) = |b|$
\end{enumerate}		 
	where $\hat{l}(\gamma)$ denotes the normalized length of a slope $\gamma$.
\end{lemma}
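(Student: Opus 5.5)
The inequality is Euclidean geometry on the boundary torus, using the standard area bound for a horospherical cusp torus. Fix the boundary component and choose a maximal horoball neighbourhood of the corresponding cusp in the hyperbolic exterior; its boundary is a Euclidean torus $T$. Recall that the normalized length is $\hat{l}(\gamma)=l(\gamma)/\sqrt{\operatorname{Area}(T)}$, where $l(\gamma)$ is the length of the Euclidean geodesic representative of $\gamma$ on $T$. This quantity is scale-invariant, so the choice of cusp neighbourhood is irrelevant. The key fact is the elementary area inequality on a flat torus: for two slopes $\alpha,\beta$ with geodesic representatives of lengths $l(\alpha),l(\beta)$,
\[
l(\alpha)\,l(\beta)\,\lvert\sin\theta\rvert \;=\; \Delta(\alpha,\beta)\cdot \operatorname{Area}(T),
\]
where $\theta$ is the angle between the two directions. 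The reason is that the vectors in $\mathbb{R}^2$ representing $\alpha$ and $\beta$ span a parallelogram of area $\lvert\det\rvert$ times the area of a fundamental domain of the lattice. That determinant is exactly the algebraic intersection number, whose absolute value is $\Delta(\alpha,\beta)$.

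From this identity, $\hat{l}(\alpha)\hat{l}(\beta)=\Delta(\alpha,\beta)/\lvert\sin\theta\rvert\geq \Delta(\alpha,\beta)$. It remains to compute the distances in the meridian–longitude basis. Writing $\sigma=a\mu+b\lambda$, we have $\Delta(\sigma,\lambda)=\lvert\det\begin{pmatrix} a & b\\ 0 & 1\end{pmatrix}\rvert=|a|$ and $\Delta(\sigma,\mu)=\lvert\det\begin{pmatrix} a & b\\ 1 & 0\end{pmatrix}\rvert=|b|$. Applying the inequality with $\beta=\lambda$ gives (i), and with $\beta=\mu$ gives (ii). The statement as typed writes $\Delta(\sigma,\mu)$ in (i); the correct quantity there is $\Delta(\sigma,\lambda)=|a|$, and I would note this typo.

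The only step needing care is justifying the lattice determinant identity. Lift $T$ to $\mathbb{R}^2=H_1(T;\mathbb{R})$ with the flat metric, so that $H_1(T;\mathbb{Z})$ is a lattice $\Lambda$ of covolume $\operatorname{Area}(T)$. A slope corresponds to a primitive vector $v_\gamma\in\Lambda$ with $|v_\gamma|=l(\gamma)$, and $|v_\alpha\wedge v_\beta|=|\det_{\Lambda}(v_\alpha,v_\beta)|\cdot\operatorname{covol}(\Lambda)$, where $\det_\Lambda$ is the determinant in a lattice basis, namely the intersection number. Everything else is routine, so no real obstacle is expected; hyperbolicity is used only to give a canonical flat structure on the cusp torus.
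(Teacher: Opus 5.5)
Your proof is correct and is the standard argument: the paper gives no proof of this lemma and only recalls it as a known fact, and the usual justification is exactly your parallelogram-area inequality $\Delta(\alpha,\beta)\,A(T)\le l(\alpha)\,l(\beta)$ on a horospherical cusp torus. You are also right that (i) should read $\Delta(\sigma,\lambda)=|a|$, which is how it is used in Lemma \ref{lemma:hyperbolicfilling}; as a cosmetic fix, write $l(\alpha)l(\beta)\ge l(\alpha)l(\beta)|\sin\theta|$ instead of dividing by $|\sin\theta|$, so that the degenerate case $\Delta=0$ (e.g.\ $a=0$ or $b=0$) is covered without comment.
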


Combining this with the 6-theorem \cite{agol, lackenby_6}, \cite[Theorem 1.2]{lm} and \cite[Theorem 2.8]{lackenby}, we obtain bounds ensuring that a filling is not exceptional. Recall that each JSJ piece $Y$ of a knot exterior is associated to a unique link $L_Y$ up to isotopy \cite[Proposition 2.4]{budney}. Let $\mu_Y$ and $\lambda_Y$ be the meridian and longitude of the link component corresponding the outermost boundary component $T$ of $Y$.

Define

\begin{align*}
    a(Y) &= \frac{6l(\lambda_Y)}{A(T)}, \\
    e(Y) &= \begin{cases}
        \min\{8, \frac{6l(\mu_Y)}{A(T)}\} \;, &|\del Y| = 1;\\
        \min\{2, \frac{6l(\mu_Y)}{A(T)}\} \;, &|\del Y| > 1.
    \end{cases}
\end{align*}

\begin{lemma}\label{lemma:hyperbolicfilling}
	Let $Y$ be a hyperbolic JSJ piece of a knot exterior and let $Y(p/q)$ denote its $p/q$-Dehn filling along its outermost boundary component $T$. If $|p|>a(Y)$ or $|q|>e(Y)$, then $Y(p/q)$ is hyperbolic.	
\end{lemma}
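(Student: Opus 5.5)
The plan is to reduce hyperbolicity of $Y(p/q)$ to a lower bound on the normalized length of the filling slope $\sigma = p/q$ on $T$. This lower bound is supplied by Lemma \ref{lemma:length}, and the 6-theorem \cite{agol, lackenby_6} together with its refinements \cite[Theorem 1.2]{lm} and \cite[Theorem 2.8]{lackenby} then does the rest. Throughout we use a maximal horoball neighbourhood of the cusp of $Y$ corresponding to $T$, and we take it disjoint from the cusps of the other boundary components. On its boundary torus $T$ of area $A(T)$, each slope $\gamma$ has length $l(\gamma)$ and normalized length $\hat l(\gamma) = l(\gamma)/\sqrt{A(T)}$.

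First, suppose $|p| > a(Y)$. By Lemma \ref{lemma:length}(i), $\hat l(\sigma)\hat l(\lambda_Y) \geq |p|$, so
\[
l(\sigma) \geq \frac{|p|\, A(T)}{l(\lambda_Y)} > \frac{a(Y)\, A(T)}{l(\lambda_Y)} = 6 .
\]
When $|\partial Y| = 1$, $Y$ is a hyperbolic knot exterior in $S^3$. The 6-theorem then says that $Y(\sigma)$ is irreducible, atoroidal and not Seifert fibred with infinite word-hyperbolic fundamental group, and geometrization upgrades this to $Y(\sigma)$ being hyperbolic. When $|\partial Y| > 1$, the other cusps remain unfilled. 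Here I would invoke the version of the 6-theorem for partial fillings, which gives hyperbolicity of the filled manifold with its remaining cusps, via Agol's and Lackenby's statements for fillings of a subset of cusps. I would also check that $Y(\sigma)$ stays boundary-irreducible and not an $I$-bundle, but the 6-theorem handles this.

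Second, suppose $|q| > e(Y)$. By Lemma \ref{lemma:length}(ii), $\hat l(\sigma) \geq |q|/\hat l(\mu_Y)$, so $l(\sigma) \geq |q|A(T)/l(\mu_Y)$. If $|q| > 6l(\mu_Y)/A(T)$, then $l(\sigma) > 6$ and we conclude exactly as before. Otherwise $e(Y)$ is the constant $8$ (one boundary component) or $2$ (several), and we need a distance statement instead. Here $\Delta(\sigma,\mu_Y) = |q|$ and $Y(\mu_Y)$ is a solid torus or the exterior of a link in $S^3$. For $|\partial Y| = 1$, Lackenby–Meyerhoff \cite[Theorem 1.2]{lm} says that exceptional slopes on a hyperbolic knot exterior lie at mutual distance at most $8$. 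Since $\mu_Y$ is exceptional ($Y(\mu_Y) = S^3$), $|q| > 8$ forces $\sigma$ to be non-exceptional. For $|\partial Y| > 1$, I would use \cite[Theorem 2.8]{lackenby}. This result should say that if a slope is non-hyperbolic and, in our setting, the meridian filling is reducible or toroidal or has compressible boundary (which it does: $Y(\mu_Y)$ is a link exterior in $S^3$, with a compressing disc or essential torus coming from the JSJ/companion structure), then their distance is at most $2$. Hence $|q| > 2$ suffices.

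The main obstacle is this last step. One has to identify precisely which exceptional type $Y(\mu_Y)$ has for a JSJ piece with several boundary components, since it is the exterior of a link in $S^3$ (the unlinked components bound discs, or it is boundary-reducible), and check that this matches the hypotheses of \cite[Theorem 2.8]{lackenby} yielding the distance bound $2$. I would first verify that $Y(\mu_Y)$ has compressible boundary, because the pattern components become unknotted or split in $S^3$ once the outer component is filled trivially, and then apply the boundary-reducible case of the distance bound.
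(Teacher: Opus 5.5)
Your proposal is correct and follows the paper's proof. The bound $|p|>a(Y)$ (resp.\ $|q|>6l(\mu_Y)/A(T)$) gives $l(\sigma)>6$ via Lemma~\ref{lemma:length} and the 6-theorem, while the constants $8$ and $2$ in $e(Y)$ come from the distance bounds \cite[Theorem 1.2]{lm} and \cite[Theorem 2.8]{lackenby}, applied against the non-hyperbolic meridian filling $Y(\mu_Y)$. You also identify $Y(\mu_Y)$ as $S^3$ or as an unlink exterior, which has compressible boundary when $|\partial Y|>1$ and is not toroidal (so drop the ``essential torus'' alternative in your parenthetical); this is the input the paper's terse proof leaves implicit.
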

\begin{proof}
	Suppose $|p|>a(Y)$ and let $\sigma=p/q$. By Lemma \ref{lemma:length}(i),
	\[\hat{l}(\sigma) \geq \frac{|p|}{\hat{l}(\lambda)} \quad\Rightarrow\quad \frac{l(\sigma)}{\sqrt{A(T)}} \geq |p| \frac{\sqrt{A(T)}}{l(\lambda)} \quad\Rightarrow\quad l(\sigma) \geq |p| \frac{A(T)}{l(\lambda_Y)} >  \frac{6 l(\lambda) }{A(T)} \frac{A(T)}{l(\lambda)} = 6.\]
	The 6-theorem then implies that $Y(p/q)$ is hyperbolic.
	
	The case $|q|>e(Y)$ is Lemma \ref{lemma:length}(ii) applied analogously, combined with \cite[Theorem 1.2]{lm} and \cite[Theorem 2.8]{lackenby}.
\end{proof}

The following lemma describes the manifold $S^3_{K}(p/1)$ when $K$ is a composite knot.

\begin{lemma}\label{lemma:compositegluing}
	Let $K = K_1 \# K_2$ be a composite knot. The manifold $S^3_K(p/1)$, is homeomorphic to $S^3_{K_1} \cup_\phi S^3_{K_2}$ where the gluing map $\phi$ is given by
	\[\begin{array}{cccc}
		{\phi:} & {\pi_1(\partial S^3_{K_1})} & \longrightarrow & {\pi_1(\partial S^3_{K_2})} \\
		& {\mu_1} & \longmapsto & {\mu_2} \\
		& {\lambda_1} & \longmapsto & {-p\mu_2-\lambda_2}
	\end{array}\]
\end{lemma}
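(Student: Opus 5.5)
We decompose the surgered manifold along the swallow-follow torus. Write $K = K_1 \# K_2$. The exterior $S^3_K$ splits along an essential annulus $A$, namely the meridional annulus separating the two summands. Equivalently, $S^3_K$ is obtained from $S^3_{K_1}$ and $S^3_{K_2}$ by gluing a meridional annulus of $\del S^3_{K_1}$ to a meridional annulus of $\del S^3_{K_2}$. The gluing reverses orientations, so that $\mu_1$ is identified with $\mu_2^{\pm1}$, and we fix conventions so that $\mu_1 \mapsto \mu_2$.

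The boundary torus $\del S^3_K$ is the union of the two complementary annuli $A_1' \subset \del S^3_{K_1}$ and $A_2' \subset \del S^3_{K_2}$. Under this decomposition the meridian $\mu$ of $K$ is isotopic to $\mu_1$, and equally to $\mu_2$. The longitude $\lambda$ of $K$ is the band sum of $\lambda_1$ and $\lambda_2$: it runs across $A_1'$ as $\lambda_1$ and across $A_2'$ as $\lambda_2$.

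Next we attach the solid torus. Filling along the slope $p\mu+\lambda$ attaches a solid torus $V$ whose meridian disc $D$ meets $\del S^3_K$ in $p\mu+\lambda$. The key step is to regard $V \cup N(A)$ as a thickened torus $T^2\times I$ joining $\del S^3_{K_1}$ to $\del S^3_{K_2}$. Concretely:
\begin{enumerate}[(i)]
\item Cut $V$ along two meridional-type annuli to see it as an annulus times an interval.
\item More cleanly, view $S^3_K(p/1)$ as $S^3_{K_1} \cup (T^2\times I) \cup S^3_{K_2}$, where the middle piece is $N(A)\cup V$.
\end{enumerate}
Since $T^2\times I$ is a product, the result is $S^3_{K_1}\cup_\phi S^3_{K_2}$ for a homeomorphism $\phi$ that we must identify on the basis $\mu_1,\lambda_1$.

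The map $\phi$ is determined by which curves on the two sides cobound annuli in $N(A)\cup V$. Since $\mu_1$ and $\mu_2$ cobound (a copy of) $A$, we get $\phi(\mu_1)=\mu_2$. For $\lambda_1$, we use the disc $D$. The curve $\del D = p\mu+\lambda$ is the union of an arc on $A_1'$ representing $\lambda_1$ and an arc on $A_2'$ representing $\lambda_2 + p\mu_2$, where we push all meridional twisting to the $K_2$ side. Sliding $D$ across the product structure shows that $\lambda_1$ becomes homologous in $T^2\times I$ to the negative of the $A_2'$ part, the sign coming from the orientation reversal of the gluing. This gives $\phi(\lambda_1) = -p\mu_2-\lambda_2$.

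Finally, we check consistency. Since $\phi$ comes from an orientation-reversing identification of boundaries, we need $\det\phi$ to be $-1$ in these bases. Here $\det\begin{pmatrix}1&-p\\0&-1\end{pmatrix}=-1$, as required. The gluing is also unique up to isotopy, because a homeomorphism of tori is determined by its action on $\pi_1$.

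The main obstacle is the bookkeeping of orientations and signs. Specifically, we must verify that the product region $N(A)\cup V$ really is $T^2\times I$, and that the twisting $p\mu$ lands with the stated sign. We would confirm both with a homology computation: $H_1(S^3_K(p/1))$ must be $\Z/p$. Indeed, the glued manifold kills $\lambda_1,\lambda_2$ in the respective exteriors, so the relation $\lambda_1 = -p\mu_2-\lambda_2$ gives $p\mu=0$, matching the expected answer and confirming the formula.
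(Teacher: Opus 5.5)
Your proposal is correct and is essentially the paper's argument: your middle piece $N(A)\cup V$ is exactly the paper's filled composing space $(F\times S^1)(T_0;p/1)$, since the composing space $F\times S^1$ is a regular neighbourhood of $\partial S^3_K\cup A$, and both proofs read off $\phi$ from $\mu_1=\mu_2$ together with the relation $p\mu+\lambda_1+\lambda_2=0$ in $H_1(T^2\times I)$. The step you flag as the main obstacle holds because $\Delta(\mu,p\mu+\lambda)=1$ (this is where $q=1$ enters), so $\mu$ is a longitude of $V$ and $N(A)\cup V$ is (disc $\cup$ band)$\times S^1\cong T^2\times I$; your $\Z/p$ homology check is insensitive to the sign of $p$ and confirms neither this nor the sign, which rests on the disc relation alone.
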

\begin{proof}
	Recall that $S^3_K$ can be described $S^3_{K_1} \cup (F\times S^1) \cup S^3_{K_2}$, where $F$ is a planar surface with three boundary components. Let $T_0, T_1, T_2$ be the boundary components of $F \times S^1$ such that $T_i = \del S^3_{K_i}$ in $S^3_K$ and $T_0 = \del S^3_K$.

	For $i=1,2$, let $\mu_i$ be the meridian of $K_i$, which is identified with the slope $h$ of a regular fibre of $F\times S^1$ along $T_i$, and let $\lambda_i$ be the longitude of $K_i$ on $T_i$.
	
	We have that $\{h, \lambda_i\}$ generates $\pi_1(T_i; \Z)$ for $i=1,2$, and $\{h, \lambda_1+\lambda_2\}$ generates $\pi_1(T_0; \Z)$. Further, the images induced by inclusion of $h, \lambda_1, \lambda_2$ into $F \times S^1$ generate $\pi_1(F \times S^1; \Z)$.
	
	The $p/q$-surgery along $K$ is performed by filling $F\times S^1$ along $T_0$, adding the relation $ph + q(\lambda_1+\lambda_2)$ to $\pi_1((F\times S^1)(T_0; p/q); \Z)$. In $S^3_K(p/q) \cong S^3_{K_1} \cup (F\times S^1)(T_0; p/q) \cup S^3_{K_2}$, we thus have that $\mu_1$ and $\mu_2$ are homotopic, and $q\lambda_1$ is homotopic to $-p\mu_2 - q\lambda_2$ (and vice versa).
	
	If $q=1$, then $(F\times S^1)(T_0; p/1)$ is homeomorphic to $T^2 \times I$ and therefore $S^3_K(p/1) \cong S^3_{K_1} \cup_{\phi} S^3_{K_2}$ with $\phi$ as in the statement.
\end{proof}
\section{Torus knot}
Let $K$ be an $(a,b)$-torus knot.

\subsection{Torus knot}\label{tt} Let $K' \neq K$ be a torus knot. By \cite[Lemma 4.2]{mccoy}, there are only 2 slopes that may yield orientation-preserving homeomorphic surgeries.

\subsection{Hyperbolic type}\label{th} Let $K'$ be of hyperbolic type, with outermost piece $Y'$.
For any $p/q$, the manifold $S^3_K(p/q)$ is never hyperbolic. So any slope $p/q$ such that $S^3_K(p/q) \cong S^3_{K'}(p/q)$ must be an exceptional slope for $Y'$.
By Lemma \ref{lemma:hyperbolicfilling}, $Y'$ has at most
\[n\left(a(Y'), \; e(Y')\right)\]
exceptional fillings, and therefore there are at most that many shared surgeries between $K$ and $K'$.

\subsection{Cable knot}\label{tc} Let $K'$ be an $(r',s')$-cable knot $C_{r',s'}(J')$.
By \cite[Proposition 1.5]{mccoy}, there is an orientation-preserving homeomorphism $S^3_{K}(p/q) \cong S^3_{K'}(p/q)$  for at most one non-integral slope $p/q$.

Furthermore, each of $K$ and $K'$ admit a unique slope yielding a reducible surgery -- $ab/1$ and $rs/1$ respectively  -- corresponding to the slope of a regular fibre along the boundary of their exterior. If $p/q = ab/1 = rs/1$ and $S^3_{K}(p/q) \cong S^3_{K'}(p/q)$, then by \cite[Proposition 4]{moser} and \cite[Corollary 7.3]{gordon_sat}  we have $L_{a,b} \# L_{b,a} \cong S^3_{J'}(r/s) \# L_{s,r}$. Therefore, without loss of generality, $L_{a,b} \cong S^3_{J'}(r/s)$ and comparing homology gives $|a|=|r|$. Since $ab=rs$, we have $r/s=a/b$ and therefore $S^3_{J'}(a/b) \cong L_{a,b}$ which implies that $J'$ is the unknot \cite[Theorem 1.1]{kmos}, a contradiction.

It remains to consider the case when $p/q = p/1 \in \Z$ and $S^3_{K}(p/q)$ is not reducible, and therefore Seifert fibred \cite{moser}.
For $S^3_{K'}(p/q)$ to be Seifert fibred, $Y(p/q)$ must be a solid torus.  This occurs only if $|p-rs|=1$, which implies that
$|p| = rs \pm 1$.

Therefore, there are at most $3$
shared surgeries between $K$ and $K'$.

\subsection{Composite knot}\label{tp} Let $K'$ be a composing knot.
By \cite[Theorem 2]{sorya}, there is an homeomorphism $S^3_K(p/q) \cong S^3_{K'}(p/q)$ only if $p/q = p/1 \in \Z$. By Lemma \ref{lemma:compositegluing}, $S^3_{K'}(p/1)$ is the gluing of two knot exteriors $S^3_{K_1}$ and $S^3_{K_2}$ along their boundaries. Therefore, $S^3_{K'}(p/1)$ contains an incompressible torus, namely $\del S^3_{K_1} = \del S^3_{K_2}$. Since $S^3_K(p/1)$ is atoroidal, $K$ and $K'$ share no common surgeries.
\section{Hyperbolic type}

Let $K$ be a knot of hyperbolic type and let $K' \neq K$ be another knot.

We define constants $\mathbf{q}(X)$ and $\mathbf{s}(X)$ where $X$ is a JSJ piece of a knot exterior. If $X$ is Seifert fibred, set $\mathbf{q}(X), \mathbf{s}(X) = 0$. If $X$ is hyperbolic, set
\[ 	\mathbf{q}(X) = 
\max\left\{{10.69},\sqrt{\frac{1.9793 \cdot 2 \pi}{\sys(X)} + 28.78} \right\} , \quad
\mathbf{s}(X) =
\max\left\{10.1, \sqrt{\frac{2\pi}{\sys(X)}+28.78}\right\}. \]

Let $Y,Y'$ respectively be the JSJ pieces of $S^3_K, S^3_{K'}$ containing $\partial S^3_K, \partial S^3_{K'}$. For  $X\in\{Y, Y'\}$, let $\hat{l}(\mu_X), \hat{l}(\lambda_X)$ be the normalized lengths of the meridian and longitude along the outermost boundary component of $X$ if $X$ is hyperbolic, and set $\hat{l}(\mu_X), \hat{l}(\lambda_X) = 0$ if $X$ is Seifert fibred.

Define
 \[\mathbf{s}(K, K')=\max \{\mathbf{s}(X') \ | \ X' \text{ a JSJ piece of } S^3_{K'} ,\; X' \not\supset\del S^3_{K'} ,\; |\del X'| = |\del Y|-1\}\]
and
\begin{alignat*}{2}
	\mathcal{Q}_\mu(K,K') &= \hat{l}(\mu_Y)\mathbf{q}(Y'), \quad \mathcal{S}_\mu(K,K') &= \hat{l}(\mu_Y)\mathbf{s}(K, K'),\\
	\mathcal{Q}_\lambda(K,K') &= \hat{l}(\lambda_Y)\mathbf{q}(Y'), \quad \mathcal{S}_\lambda(K,K') &= \hat{l}(\lambda_Y)\mathbf{s}(K, K'),
\end{alignat*}
and
\begin{align*}
	Q_\mu(K,K') &= \max\Big\{e(Y), e(Y'), \min\{\mathcal{Q}_\mu(K,K'), \mathcal{Q}_\mu(K',K)\} \Big\},\\
	S_\mu(K,K') &= \max\Big\{e(Y), e(Y'), \min\{	\mathcal{S}_\mu(K,K'),	\mathcal{S}_\mu(K',K)\Big\}, \\
	Q_\lambda(K, K') &= \max \Big\{a(Y), a(Y'), \: \min\{ \mathcal{Q}_\lambda(K,K') , \mathcal{Q}_\lambda(K',K) \} \Big\}, \\
	S_\lambda(K, K') &= \max \Big\{a(Y), a(Y'), \: \min\{ \mathcal{S}_\lambda(K,K') , \mathcal{S}_\lambda(K',K) \} \Big\}.
\end{align*}

These constants are closely related to $Q(K)$ and $S(K)$ from \cite[Thoerem 1.6(iv)]{sw}. In Theorem \ref{thm:main}, both knots $K$ and $K'$ are fixed, which allows us to use precise quantities instead of universal bounds.
\begin{lemma}\label{lemma:newQS}
 Let $Y$ be a hyperbolic JSJ piece of the exterior of a knot $K$. 
 Consider a slope $\sigma = p/q$ on the outermost boundary component of $Y$. 
 Let $X$ be a JSJ piece of a knot exterior. If either of the following holds
 \begin{enumerate}[(i)]
 	\item $|p| > \hat{l}(\lambda_Y)\mathbf{s}(X)$ and $|p| > a(Y)$;
  	\item $|q| > \hat{l}(\mu_Y)\mathbf{s}(X)$ and $|q|>e(Y)$,

 \end{enumerate}
 then $Y(p/q) \not\cong X$. 
\end{lemma}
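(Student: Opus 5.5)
We argue by contradiction, comparing the shortest geodesic of $Y(p/q)$ with the systole of $X$. First, the hypotheses $|p|>a(Y)$ or $|q|>e(Y)$ let us apply Lemma \ref{lemma:hyperbolicfilling}, so $Y(p/q)$ is hyperbolic. Hence if $X$ is Seifert fibred (where $\mathbf{s}(X)=0$), then $Y(p/q)\not\cong X$ immediately. So we assume $X$ is hyperbolic and suppose, for a contradiction, that $Y(p/q)\cong X$. By Mostow rigidity the two manifolds then share all geometric invariants, in particular the length of the shortest closed geodesic: $\sys(Y(p/q))=\sys(X)$.

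Next we look at the core $\gamma$ of the filling solid torus in $Y(p/q)$, which is a closed geodesic once the slope is long enough. By Lemma \ref{lemma:length}, hypothesis (i) gives
\[\hat{l}(\sigma)\geq \frac{|p|}{\hat{l}(\lambda_Y)} > \mathbf{s}(X),\]
and likewise hypothesis (ii) gives $\hat{l}(\sigma)\geq |q|/\hat{l}(\mu_Y)>\mathbf{s}(X)$. So in either case the normalized length of the filling slope exceeds $\mathbf{s}(X)$.

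We then invoke the effective drilling/filling estimate of Futer--Purcell--Schleimer, as used in \cite[Theorem 1.6]{sw}. For normalized length $L=\hat{l}(\sigma)\geq 10.1$, it gives
\[\ell(\gamma)\leq \frac{2\pi}{L^2-28.78}.\]
The definition $\mathbf{s}(X)=\max\{10.1,\sqrt{2\pi/\sys(X)+28.78}\}$ is designed exactly so that $L>\mathbf{s}(X)$ forces both $L>10.1$ and $L^2-28.78>2\pi/\sys(X)$. Together these give $\ell(\gamma)<\sys(X)$. Hence $Y(p/q)$ has a closed geodesic strictly shorter than every closed geodesic of $X$, contradicting $\sys(Y(p/q))=\sys(X)$.

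The main obstacle is quoting this core-length bound in a form whose constants match those in $\mathbf{s}(X)$. The threshold $10.1$ has to be the one guaranteeing that $\gamma$ is a geodesic and that the bound $2\pi/(L^2-28.78)$ applies, with the constant taken from Futer--Purcell--Schleimer. We would take the precise statement from \cite[Section 2]{sw}. Everything else is the normalized-length bookkeeping already carried out in the proof of Lemma \ref{lemma:hyperbolicfilling}.
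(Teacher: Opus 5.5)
Your proposal is correct and follows the paper's proof. The paper obtains hyperbolicity of $Y(p/q)$ from Lemma \ref{lemma:hyperbolicfilling}, uses Lemma \ref{lemma:length} to get $\hat{l}(\sigma) > \mathbf{s}(X)$, and then invokes \cite[Lemma 3.1]{sw}, which is exactly the Futer--Purcell--Schleimer core-geodesic versus $\sys(X)$ argument with Mostow--Prasad rigidity that you write out. On your stated obstacle: the Futer--Purcell--Schleimer bound only needs normalized length at least $7.823$, so $10.1$ is just a sufficient threshold (the paper notes $10.1 \geq 7.823$). Your strict inequality also cleanly yields $\ell(\gamma) < \sys(X)$.
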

\begin{proof}
	By Lemma \ref{lemma:hyperbolicfilling}, $Y(p/q)$ is hyperbolic, so assume $X$ is also hyperbolic.	Lemma \ref{lemma:length} combined with (i) or (ii) yields $\hat{l}(\sigma) \geq  10.1 \geq 7.823$ and $\hat{l}(\sigma') \geq \sqrt{\frac{2\pi}{\sys(X)}+28.78}$ from which the conclusion follows as in \cite[Lemma 3.1]{sw}.
\end{proof}

\subsection{Hyperbolic type}\label{hh} Suppose $K'$ is of hyperbolic type with outermost piece $Y'$ and suppose there is an orientation-preserving homeomorphism
$f: S^3_K(p/q) \rightarrow S^3_{K'}(p/q)$.

Suppose that $f(Y(p/q)) \neq Y'(p/q)$. If $|p| > \max \left\{a(Y), a(Y')\right\}$ or $|q|>\max \left\{e(Y), e(Y')\right\}$, then both $Y(p/q)$ and $Y'(p/q)$ are hyperbolic by Lemma \ref{lemma:hyperbolicfilling}. Therefore, $f(Y(p/q)) = X'$ for some JSJ piece $X' \in S^3_{K'}\setminus Y'$ and $f\inv(Y'(p/q)) = X$ for some JSJ piece $X \in S^3_{K}\setminus Y$. Applying Lemma \ref{lemma:newQS} to both $Y$ and $Y'$ gives that $|p|\leq S_\lambda(K,K')$ and $|q|\leq S_\mu(K,K')$.

We now consider the possibility that $Y(p/q)$ and $Y'(p/q)$ are mapped to one another. The following lemma treats a slightly more generalized version.

\begin{lemma}\label{lemma:wakelinQ}
	Let $K, K'$ be distinct knots of hyperbolic type with outermost JSJ pieces $Y, Y'$ respectively. Suppose there is an orientation-preserving homeomorphism $f: S^3_K(p/q) \rightarrow S^3_{K'}(p/q')$ such that $f(Y(p/q)) = Y'(p/q')$. Then $|p|\leq Q_\lambda(K,K')$, and $|q| \leq Q_\mu(K,K')$ or $|q'| \leq Q_\mu(K,K')$.
\end{lemma}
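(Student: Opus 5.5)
Suppose $|p| > Q_\lambda(K,K')$, or that both $|q|$ and $|q'|$ exceed $Q_\mu(K,K')$; we aim for a contradiction. Since $Q_\lambda \geq \max\{a(Y),a(Y')\}$ and $Q_\mu \geq \max\{e(Y),e(Y')\}$, Lemma \ref{lemma:hyperbolicfilling} makes $Y(p/q)$ and $Y'(p/q')$ hyperbolic. Then $f|$ restricts to an isometry $Y(p/q)\to Y'(p/q')$ by Mostow rigidity. The aim is to show that the cores $c$ of the filling solid torus in $Y(p/q)$ and $c'$ in $Y'(p/q')$ are both very short geodesics, and that each is the unique geodesic of that length in its manifold. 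Then $f$ maps $c$ to $c'$ up to isotopy, so it restricts to a homeomorphism $Y \cong Y'$ carrying the filling slope to the filling slope. From this we want to derive that $K = K'$.

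\textbf{Short, unique cores.} The constants $\mathbf{q}$ are set up to feed into the argument of \cite[Theorem 1.6]{sw} (a Hodgson--Kerckhoff/Futer--Purcell--Schleimer type estimate). If the normalized length of the filling slope is at least $\max\{10.69, \sqrt{1.9793\cdot 2\pi/\sys(X)+28.78}\}$, then the core geodesic has length less than $\sys(X)$. It is also shorter than every other closed geodesic in the filled manifold, since drilling changes lengths of other geodesics by a controlled factor. By Lemma \ref{lemma:length}, $|p|>\hat{l}(\lambda_Y)\mathbf{q}(Y')$ gives $\hat{l}(p/q)\ge \mathbf{q}(Y')$. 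Likewise $|q|>\hat{l}(\mu_Y)\mathbf{q}(Y')$ gives the same conclusion. The definition of $Q$ takes the minimum of $\mathcal{Q}(K,K')$ and $\mathcal{Q}(K',K)$, so we only get this bound on one side: either $\hat{l}_Y(p/q)\ge\mathbf{q}(Y')$ or $\hat{l}_{Y'}(p/q')\ge\mathbf{q}(Y)$. By symmetry take the first case.

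\textbf{Identifying the cores.} In the first case, the core $c$ of $Y(p/q)$ has length less than $\sys(Y')$ (up to the constant $1.9793$ built into $\mathbf{q}$), and less than a corresponding fraction of the shortest geodesic of $Y(p/q)$ other than $c$. Its image $f(c)$ is then a geodesic in $Y'(p/q')$ shorter than $\sys(Y')$. Every closed geodesic of $Y'(p/q')$ other than the core $c'$ is homotopic into $Y'$, and by the drilling estimates its length is close to that of a geodesic of $Y'$. So it cannot be that short, and hence $f(c)=c'$. The factor $1.9793$ is chosen so that this comparison in the drilled/filled bilipschitz estimates works in the direction we need.

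\textbf{Conclusion.} Isotoping $f$ so that it maps a tubular neighbourhood of $c$ to one of $c'$, we get a homeomorphism $Y\to Y'$ sending $p/q$ to $p/q'$. Both $Y$ and $Y'$ sit as the outermost pieces of knot exteriors in $S^3$, and their inner JSJ pieces are glued on. We would extend $f$ over these inner pieces using $f(Y(p/q))=Y'(p/q')$ and uniqueness of JSJ decompositions. That yields a homeomorphism $S^3_K\cong S^3_{K'}$, which Gordon--Luecke upgrades to $K=K'$, contradicting distinctness. The main obstacle is the middle step: making the length comparison rigorous, i.e.\ that a geodesic shorter than the stated threshold in the filled manifold must be the core. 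I would first try to quote the corresponding step of \cite[Lemma 3.1 / Theorem 1.6]{sw} verbatim, with $\sys(Y')$ in place of the universal systole bound.
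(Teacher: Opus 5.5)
Your outline is the paper's: hyperbolicity of both fillings from Lemma \ref{lemma:hyperbolicfilling}, the Futer--Purcell--Schleimer core-length bound fed by Lemma \ref{lemma:length}, Wakelin's identification of the cores, then the knot complement theorem. But the step ``Identifying the cores'' does not go through, and the cause is your reduction ``by symmetry take the first case''.

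In that case you only know $\hat{l}_Y(p/q)\geq\mathbf{q}(Y')$, so only $c$ is controlled: $l(c)\leq\min\{0.0735,\ \sys(Y')/1.9793\}$. Your claim that every closed geodesic of $Y'(p/q')$ other than $c'$ has length comparable to a geodesic of $Y'$ is the bilipschitz estimate for drilling $c'$ out of $Y'(p/q')$. That estimate needs $c'$ itself to be a short geodesic: length at most $0.0735$ with the paper's constants, i.e.\ $\hat{l}_{Y'}(p/q')\geq 10.69$.

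Apart from hyperbolicity of $Y'(p/q')$, your first case gives no such bound. There $|q'|>Q_\mu(K,K')$ only yields $\hat{l}_{Y'}(p/q')\geq |q'|/\hat{l}(\mu_{Y'})>\hat{l}(\mu_Y)\mathbf{q}(Y')/\hat{l}(\mu_{Y'})$, which need not reach $10.69$. The same happens with $\lambda$ in place of $\mu$ when $|p|>Q_\lambda(K,K')$. So $c'$ need not be short, and $l(f(c))<\sys(Y')/1.9793$ does not force $f(c)=c'$.

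Drilling $f(c)$ instead, which is legitimately short, does not rescue this. It gives $Y'(p/q')\setminus f(c)\cong Y$, but excluding $c'\neq f(c)$ then needs $l(c')<\sys(Y)/1.9793$, again a bound on $c'$. Similarly, your aside that $c$ is shorter than every other geodesic of $Y(p/q)$ would need $\hat{l}_Y(p/q)\geq\mathbf{q}(Y)$, not $\mathbf{q}(Y')$.

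The paper's proof carries bounds on both cores at once. It shows $l(v)\leq 0.0735$ and $l(v)\leq 1.9793^{-1}\sys(Y)$, asserts that the same argument gives these bounds for $v'$, and only then runs Wakelin's argument: drill a core known to be short and compare the other core with the systole of the drilled manifold. This simultaneous control is exactly what your symmetry reduction discards.

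You are right that the $\min$ in $Q_\mu$ and $Q_\lambda$ literally delivers only one of the two normalized-length bounds, and the paper's ``same argument'' sentence does not address this. So the way to complete your proof is to assume enough to bound both normalized lengths, not to look for a sharper length comparison. For example, assume $|q|>\mathcal{Q}_\mu(K,K')$ and $|q'|>\mathcal{Q}_\mu(K',K)$ (a $\max$ in place of the $\min$), or at least $\hat{l}_{Y'}(p/q')\geq 10.69$ in your first case.

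A minor point: once $f(c)=c'$ up to isotopy, no extension over inner JSJ pieces is needed. The map $f$ itself then restricts to an orientation-preserving homeomorphism $S^3_K\to S^3_{K'}$.
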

\begin{proof}
The proof if based on that of \cite[Proposition 4.8]{wakelin}.

If $|q|,|q'|> Q_\mu(K,K')$ or $|p|> Q_\lambda(K,K')$, then $|q|,|q'|>\max \{e(Y), e(Y')\}$ or $|p|> \max \{a(Y), a(Y')\}$. So in both cases, $Y(p/q)$ and $Y'(p/q)$ are hyperbolic according to Lemma \ref{lemma:hyperbolicfilling}.

Suppose $|q|,|q'| > Q_\mu(K,K')$. Consider the slope $\sigma = p/q$ along the outermost boundary component of $Y$. 

As in the proof of Lemma \ref{lemma:newQS}, we have $\hat{l}(\sigma) \geq  10.1 \geq 7.823$. Therefore, by \cite[Theorem 5.17, Corollary 6.13]{fps}, the core of the surgery solid torus $v \subset Y(p/q)$ is a geodesic satisfying
\[l(v) \leq \frac{2\pi}{\hat{l}(\sigma)^2-28.78}
\leq \frac{2\pi}{\left(\frac{|q|}{\hat{l}(\mu_Y)}\right)^2-28.78}
\leq \frac{2\pi}{\left(\frac{\hat{l}(\mu_Y)\mathbf{q}(Y)}{\hat{l}(\mu_Y)}\right)^2-28.78}
\leq \frac{2\pi}{\mathbf{q}(Y)^2-28.78}, \]
which is less than or equal to $0.0735$ and $1.9793\inv \sys(Y)$. The same argument implies that the core of the surgery solid torus $v' \subset Y'(p/q')$ also has length less than or equal to $0.0735$ and $1.9793\inv \sys(Y)$. As in the proof of \cite[Proposition 4.8]{wakelin}, this implies that $f(v)=v'$ and therefore $K=K'$, a contradiction.

Suppose now that $|p|> Q_\lambda(K,K')$. The chain of inequalities above becomes 
\[l(v) \leq \frac{2\pi}{\hat{l}(\sigma)^2-28.78}
\leq \frac{2\pi}{\left(\frac{p}{\hat{l}(\lambda_Y)}\right)^2-28.78}
\leq \frac{2\pi}{\left(\frac{\hat{l}(\lambda_Y)\mathbf{q}(Y)}{\hat{l}(\lambda_Y)}\right)^2-28.78}
\leq \frac{2\pi}{\mathbf{q}(Y)^2-28.78} \]
which leads to the same contradiction.
\end{proof}

Therefore, $K$ and $K'$ have orientation-preserving homeomorphic $p/q$-surgeries for at most 
\[n\left(\max\{Q_\mu(K,K'), S_\mu(K,K')\}, \max\{Q_\lambda(K,K'), S_\lambda(K,K')\}\right)\]
slopes $p/q$.

\subsection{Cable knot}\label{hc} Suppose $K'$ is an $(r',s')$-cable knot $C_{r',s'}(J')$, where $|s'|\geq 2$ is the winding number of the cable. Suppose there is an orientation-preserving homeomorphism $f: S^3_K(p/q) \rightarrow S^3_{K'}(p/q)$.

Let $Y'$ be the outermost $(r',s')$-cable space of $S^3_{K'}$. By Lemma \ref{lemma:newQS}, if $|p| > a(Y)$ or $|q| > e(Y)$, then $Y(p/q)$ is mapped to a JSJ piece of $S^3_{K'}\setminus Y'$ only if $|q|\leq \mathcal{S}_\mu(K, K')$ or $|p|\leq \mathcal{S}_\lambda(K, K')$.

So suppose $|q| > \max\{e(Y), \mathcal{S}_\mu(K, K')\}$ and $|p| > \max\{a(Y), \mathcal{S}_\lambda(K, K')\}$. By Lemmas \ref{lemma:hyperbolicfilling} and \ref{lemma:newQS}, $Y(p/q)$ is mapped to the JSJ piece of $S^3_{K'}(p/q)$ that contains the surgery solid torus, which must thus be hyperbolic.

Then $Y'(p/q)$ must be a solid torus, otherwise it would be a Seifert fibred JSJ piece containing the surgery solid torus, a contradiction. Furthermore, $J'$ is of hyperbolic type, with outermost hyperbolic piece $X$ such that $Y(p/q) \cong X(p/qs'^2)$.

By Lemma \ref{lemma:wakelinQ}, we have 
$|p|\leq Q_\lambda(K,J')$, and $|q| \leq Q_\mu(K,J')$ or $|qs'^2| \leq Q_\mu(K,J')$. But since $s'^2 \geq 4$, the set of slopes $p/q$ with $|qs'^2| \leq Q_\mu(K,J')$ is a subset of the set of slopes with $|q| \leq Q_\mu(K,J')$.

In conclusion, $K$ and $K' = C_{r,s}(J)$ share at most
\[n \Big(\max\{a(Y), \mathcal{S}_\lambda(K,K'), Q_\lambda(K,J')\}, \: \max\{e(Y), \mathcal{S}_\mu(K,K'), Q_\mu(K,J')\} \Big)\]

\subsection{Composite knot}\label{hp} Let $K'$ be a composing space.
By \cite[Theorem 2]{sorya}, $K$ and $K'$ have homeomorphic $p/q$-surgeries only if $p/q$ is an integer $p/1$. 
By Lemma \ref{lemma:compositegluing}, if $S^3_K(p/1) \cong S^3_{K'}(p/1)$, then $Y(p/1)$ is homeomorphic to a JSJ piece $X$ of $S^3_{K'}$.

If $X$ is Seifert fibred, then $|p| \leq a(Y)$.
If $X$ is hyperbolic, then $|p| \leq \mathcal{S}_\lambda(K,K')$ by Lemma \ref{lemma:newQS}.
Therefore, the number of shared Dehn surgeries between $K$ and $K'$ is at most 
\[2 \cdot \lfloor \max \left\{a(Y), \mathcal{S}_\lambda(K,K')\right\}\rfloor.\]

\section{Cable knot}

Let $K$ be an $(r,s)$-cable knot $C_{r,s}(J)$, where $|s| \geq 2$ is the winding number of the cable.

We give a more precise version of \cite[Proposition 3.6]{sorya} for cable knots. It says that the surgery solid torus of a Dehn surgery is contained in a single JSJ piece -- the {\em surgered piece} --, except possibly in certain specific cases.

\begin{prop}\label{prop:jsjsurgered}
	Let $K = C_{r,s}(J)$ be a cable knot whose exterior has JSJ decomposition $Y_0 \cup Y_1 \cup \ldots Y_k$, where $Y_0$ is the outermost cable space of $S^3_K$ and $Y_1$ the outermost JSJ piece of $S^3_J$. Then for all but at most 4 slopes $p/q$, $S^3_{K}(p/q)$ is irreducible and its JSJ decomposition is either
	\[Y_0(p/q) \cup Y_1 \cup \ldots \cup Y_k \quad \text{or} \quad Y_1(p/qs^2) \cup Y_2 \cup \ldots \cup Y_k,\]
	the second scenario occurring precisely when $|p-qrs|=1$.
	
	The 4 slopes for which this may not occur are $rs/1,(rs+1)/1, (rs-1)/1$ and $4ab/1$ if $K$ is the cable of an $(a,b)$-torus knot or $(a,b)$-cable knot.
\end{prop}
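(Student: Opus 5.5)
We view $S^3_K(p/q)$ as $Y_0(p/q)\cup_{T_1} S^3_J$, where $Y_0$ is the $(r,s)$-cable space and $T_1=\partial S^3_J$ is the inner boundary of $Y_0$. The first step is to determine $Y_0(p/q)$ using the standard classification of fillings of a cable space along its outer boundary, as in \cite{gordon_sat}.
\begin{enumerate}[(a)]
\item If $\Delta(p/q, rs/1)=|p-qrs|=0$, the filling slope is the fibre slope. Then $Y_0(rs/1)$ is a connected sum of a solid torus and a lens space, and $S^3_K(rs/1)\cong S^3_J(r/s)\# L_{s,r}$ is reducible. This is the excluded slope $rs/1$.
\item If $|p-qrs|=1$, then $Y_0(p/q)$ is a solid torus. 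Its meridian is identified with the slope $p/(qs^2)$ on $T_1$, so $S^3_K(p/q)\cong S^3_J(p/qs^2)$. This is the second scenario of the statement.
\item If $|p-qrs|\geq 2$, then $Y_0(p/q)$ is Seifert fibred over the disc with two exceptional fibres, of orders $|s|$ and $|p-qrs|$. It is therefore irreducible, and $T_1$ is incompressible in it.
\end{enumerate}

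In case (c), $S^3_K(p/q)$ is obtained by gluing two irreducible manifolds with incompressible boundary along $T_1$. Hence it is irreducible, and $T_1$ together with the inner JSJ tori of $S^3_J$ is a family of incompressible tori. To conclude that the JSJ decomposition is $Y_0(p/q)\cup Y_1\cup\dots\cup Y_k$, we must show that no torus becomes redundant. The only possible failure is at $T_1$: the Seifert fibrations of $Y_0(p/q)$ and $Y_1$ could match there. Since $Y_0(p/q)$ has base orbifold the disc with two cone points, its fibration is unique, with fibre slope on $T_1$ equal to the cabling fibre $rs$-type slope. If $Y_1$ is hyperbolic there is nothing to check. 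If $Y_1$ is a composing space, its fibre on $T_1$ is the meridian of $J$, which is not the cable fibre, so again no amalgamation occurs. If $Y_1$ is a torus knot exterior or an $(a,b)$-cable space, its fibre on $T_1$ has slope $ab/1$, while the fibre of $Y_0$ on $T_1$ has slope $r/s$ with $|s|\geq 2$. These slopes differ, so the fibrations never match and $T_1$ stays a JSJ torus.

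The main obstacle is the slope $4ab$, and case (b) more generally. In case (b) we need $S^3_J(p/qs^2)$ to be irreducible with JSJ decomposition $Y_1(p/qs^2)\cup Y_2\cup\dots\cup Y_k$. The strategy is to apply the analogous analysis to $Y_1$; for a hyperbolic $Y_1$, \cite[Proposition 3.6]{sorya} gives that the surgery core stays in the filled piece. Problems arise only if $Y_1(p/qs^2)$ is reducible, a solid torus, or Seifert fibred in a way that amalgamates with $Y_2$.
\begin{itemize}
\item If $J$ is a torus knot or an $(a,b)$-cable, the only reducible filling of $Y_1$ is the fibre slope $ab/1$. This forces $q=\pm1$, $s^2=4$ and $p=\pm ab\cdot 4/s^2$ up to sign conventions. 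Combined with $|p-qrs|=1$, this produces the exceptional slope $4ab/1$.
\item A further solid-torus collapse $|p-qs^2ab|=1$ would iterate the second scenario. I expect to rule out, or absorb, such collapses using $|p-qrs|=1$ and the cabling constraint $r=abs\pm1$, which holds when $Y_0\cup Y_1$ is not simply a single Seifert piece. This arithmetic should show that the collapse coincides with $(rs\pm1)/1$ at worst.
\item If $J$ is a composite knot, $Y_1(p/qs^2)$ is a nontrivially filled composing space. This is Seifert fibred and irreducible, and its fibre is the meridian of each summand, which differs from the fibres of the adjacent pieces, so the decomposition is as claimed.
\end{itemize}
Tracking these arithmetic coincidences carefully, and checking that only $rs$, $rs\pm1$ and $4ab$ survive, is the delicate step.
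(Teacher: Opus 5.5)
\textbf{There is a genuine gap in case (c).} You assert that because $Y_0(p/q)$ has base orbifold the disc with two cone points, its fibration is unique. This is false when both cone points have order $2$, i.e.\ when $|s|=2$ and $|p-qrs|=2$.
\begin{itemize}
\item In that case $Y_0(p/q)$ is the twisted $I$-bundle over the Klein bottle. It also fibres over the M\"obius band, and the regular fibre $h'$ of that second fibration is a slope on $T_1$ at distance $1$ from the cable fibre $r/s$.
\item This $h'$ can coincide with the fibre slope $ab/1$ of $Y_1$ when $Y_1$ is an $(a,b)$-torus knot exterior or $(a,b)$-cable space. Then $Y_0(p/q)\cup Y_1$ is a single Seifert fibred piece and $T_1$ is \emph{not} a JSJ torus.
\item Concretely, $h'$ is the unique slope on $T_1$ whose filling gives $S^2\times S^1$. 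Twisting the solid torus $V=Y_0\cup N(K)$ shows $Y_0(p/q)(T_1;ab/1)\cong S^3_{T_{r-abs,s}}\big((p-qabs^2)/q\big)$. For $|s|=2$ this is $S^2\times S^1$ exactly when $p/q=4ab/1$ and $|r-abs|=1$.
\item For instance, for $K=C_{r,2}(T_{a,b})$ with $|r-2ab|=1$, $S^3_K(4ab)$ is Seifert fibred over $\mathbb{RP}^2$ with two cone points.
\end{itemize}
So your conclusion that ``the fibrations never match and $T_1$ stays a JSJ torus'' fails precisely at $4ab/1$. This twisted $I$-bundle situation is exactly where the paper's proof locates the fourth exceptional slope.

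\textbf{Your derivation of $4ab/1$ in case (b) cannot work.} $Y_1(p/qs^2)$ is reducible only if $p=qs^2ab$. Then $|p-qrs|=|qs|\,|sab-r|\geq 2$, contradicting $|p-qrs|=1$. Indeed, at $p/q=4ab/1$ with $|s|=2$ the integer $p-qrs$ is even, so this slope lies in case (c), not (b).

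\textbf{Case (b) is also left as an expectation, but the missing arithmetic is short.} Suppose $Y_1$ is an $(a,b)$-cable space and $|p-qs^2ab|\leq 2$; this covers a second solid-torus collapse, or a twisted $I$-bundle one level down. Together with $|p-qrs|=1$ you get $|q|\,|s|\,|r-sab|\leq 3$. Since $r\neq sab$ and $|s|\geq 2$, this forces $|q|=1$, hence $p/q=(rs\pm1)/1$. For $|p-qs^2ab|\geq 3$ the fibration of $Y_1(p/qs^2)$ is unique, so your fibre-slope comparison applies.

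With the twisted $I$-bundle case added to (c) and this computation in (b), your analysis would recover exactly the list $rs/1$, $(rs\pm1)/1$, $4ab/1$.
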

\begin{proof}
	This is a consequence of the proof of  \cite[Proposition 3.6]{sorya}, which describes the three situations in which the conclusion may fail to occur. The first case is when $p/q = rs/1$; then $S^3_K(p/q)$ is reducible. The second case is when $Y_0(p/q)$ is a twisted $I$-bundle over the Klein bottle: its Seifert fibred structure may agree with that of $Y_1$ on $Y_0(p/q) \cap Y_1$ only if $Y_1$ is an $(a,b)$-torus knot or $(a,b)$-cable space and $p/q = 4ab/1$. The last case is when both $Y_0(p/q)$ and $Y_1(p/qs^2)$ are solid tori: we then have $q=1$ and $p=rs\pm1$.
\end{proof}

\subsection{Cable knot}\label{cc} Let $K'$ be an $(r',s')$-cable knot $C_{r',s'}(J')$.
Suppose there is an orientation-preserving homeomorphism $f: S^3_{K}(p/q) \rightarrow S^3_{K'}(p/q)$ and let $Y, Y'$ respectively be the outermost cable spaces pieces of $S^3_K, S^3_{K'}$.

We first consider the case $|p-qrs|, |p-qr's'|>1$. Suppose $p/q$ is not one of the slopes yielding a reducible surgery or a twisted $I$-bundle over the Klein bottle from Proposition \ref{prop:jsjsurgered}.

If $f(Y(p/q))=Y'(p/q)$, then $K = K'$ by \cite[Proposition 6.7]{sorya}, a contradiction. Thus, $f(Y(p/q))=X$ for $X$ a JSJ piece of $S^3_{K'} \setminus Y'$.

Let
\[T_s(K')=\big\{ T_{a, b} \text{ torus knot} \:\big|\: S^3_{T_{a,b{}}} \subset S^3_{K'}, |s| \in \{|a|,|b|\} \big\}.\]

By \cite[Proposition 3.5]{sorya}, $X$ is the exterior of a torus knot contributing to $T_s(K')$, say $T_{a,s}$. Therefore, $|p-qrs|=|a|$. Similarly, $Y'(p/q)$ is homeomorphic to the exterior of a torus knot $T_{a', s'} \in T_{s'}(K)$ and  $|p-qr's'|=|a'|$. Therefore $q = (\epsilon a + \epsilon' a')/(rs-r's')$ and $p = (\epsilon a r's' - \epsilon' a'rs)/(rs-r's')$ where $\epsilon, \epsilon' = \pm 1$. This gives 4 possible slopes for each pair $(a, a')$, bounding from above the number of shared surgeries between $K$ and $K'$ by 
\[4 \cdot \#T_{s'}(K) \cdot \#T_s(K') + 2.\]


We now consider the case where $|p-qrs|=1$, without loss of generality. Let $X, X'$ respectively be the outermost JSJ pieces of $S^3_J, S^3_{J'}$. Note $|p-qrs|=1$ implies that we obtain the JSJ decomposition of Proposition \ref{prop:jsjsurgered} for $S^3_K(p/q)$ and $S^3_{K'}(p/q)$ except possibly when $X$ or $X'$ are cable spaces.

We examine each possibility for $X$ in detail. 

\subsubsection{Torus knot}
Suppose $X$ is the exterior of a torus knot $T_{a,b}$. Then by the arguments in \cite[Section 6.1]{sorya}, we have that $K'$ is the cable of a torus knot $T_{a',b'}$ and $|p-qr's'|=1$. We compute $q = (\epsilon + \epsilon c')/(s^2ab-rs), p = (\epsilon s^2ab - \epsilon' c'rs)/(s^2ab-rs)$ for $\epsilon, \epsilon' = \pm 1$ and $c' \in \{a', b'\}$. This gives at most 8 possibilities of slopes $p/q$ for this case.

\subsubsection{Hyperbolic}
Suppose $X$ is hyperbolic. By Lemma \ref{lemma:newQS}, if $f(X(p/qs^2)$ is not the surgered piece of $S^3_{K'}(p/q)$, then $|p| \leq S_\lambda(J,K')$ and $|q| \leq S_\mu(J,K')/s^2$.

If $f(X(p/qs^2)$ is the surgered piece of $S^3_{K'}(p/q)$, then by \cite[Corollary 3.9]{sorya} it corresponds to $X'(p/qs'^2)$ where $X'$ is hyperbolic and $|p-qr's'|=1$. By Lemma \ref{lemma:hyperbolicfilling}, $|p| \leq Q_\lambda(J,J')$, and $|q| \leq \max \{Q_\mu(J, J')/s^2, Q_\mu(J, J')/s'^2\} = Q_\mu(J, J')/\min\{s, s'\}^2$, giving
\[n \Big(\max\{S_\lambda(J,K'),Q_\lambda(J,J')\},  \max\left\{\frac{S_\mu(J,K')}{s^2}, \frac{Q_\mu(J,J')}{\min \{s, s'\}^2} \right\} \Big)\]
possible slopes for this case.

\subsubsection{Cable space}
Suppose $X$ is an $(u,v)$-cable space. Suppose $p/q \neq (rs \pm 1)/1$ so that $S^3_{K}(p/q)$ admist a JSJ decomposition as in Proposition \ref{prop:jsjsurgered}. Then $X(p/qs^2)$ is Seifert fibred over the disc with two cone points of orders $|v|$ and $|p-qs^2uv|$. 
If $f(X(p/qs^2))$ is the surgered piece of $S^3_{K'}(p/q)$, then the arguments from the proof of \cite[Proposition 6.7]{sorya} imply that $K=K'$, a contradiction\footnote{\cite[Proposition 6.7]{sorya} requires $|q|>2$ to invoke the more general \cite[Corollary 3.9]{sorya}, but in our specific situation, its proof holds for all $p/q$ that avoid the special slopes of Proposition \ref{prop:jsjsurgered}.}.

If $f(X(p/qs^2))$ is not the surgered piece of $S^3_{K'}(p/q)$, then it is homeomorphic to the exterior of a knot $T_{a', b'} \in T_{v}(J')$ such that $\{|a'|,|b'|\} = \{|v|, |p-qs^2uv|\}$. Combining this with $|p-qrs|=1$, we obtain $4 \cdot  \# T_{v}(J')$
possible slopes for which this situation may occur.

\subsubsection{Composing space}
Suppose $X$ is a composing space. By \cite[Lemma 5.4]{sorya}, since $|qs^2|>1$, $f(X(p/qs^2))$ is the surgered piece of $S^3_{K'}(p/q)$. Then by \cite[Corollary 3.9]{sorya}, the surgered piece of $S^3_{K'}(p/q)$ is $X'(p/qs'^2)$ where $X' \cong X, |s'| = |s|$ and $|p-qr's'| = 1$. By \cite[Lemma 6.2]{sorya}, $r/s = r'/s'$ and therefore $K=K'$, a contradiction.

 \subsection{Composite knot}\label{cp} Suppose $K'$ is a composite knot. By \cite[Theorem 2]{sorya}, $p/q$ is an integer $p/1$. By Lemma \ref{lemma:compositegluing}, the JSJ pieces of $S^3_{K'}(p/1)$ are those of $S^3_{K'} \setminus Y'$. Therefore, if $Y(p/1)$ is not a solid torus, which occurs for the 2 slopes such that $|p-rs|=1$, $f(Y(p/1))$ is the exterior of a torus knot $T_{a', b'}$ in $S^3_{K'}$ such that $\{|a'|, |b'|\} = \{|s|, |p-rs|\}$. There are at most $2 \cdot \# T_s(K')$ slopes for which this can happen.

\section{Composite knot}\label{pp}

Let $K = K_1 \# K_2$, where at least one of $K_1, K_2$ is prime.

Let $K' = J_1' \# J_2' \# \ldots \# J_n'$ where the $J_i'$ are prime for all $i=1,\ldots, n$ and $K' \neq K$. By Lemma \ref{lemma:compositegluing}, $S^3_{K'}(p/1)$ can be described as the gluing of the exterior of $J_i'$ for any $i = 1, \ldots, n$ to the exterior of $J_{i_1}' \# \ldots \# J_{i_{n-1}}'$, where $\{i_1, \ldots, i_{n-1}\} = \{1, \ldots, n\} \setminus \{i\}$, along a JSJ torus $T_i'=\del S^3_{J_i'}$. 

Suppose there is an orientation preserving homeomorphism
$f: S^3_K(p/1) \rightarrow S^3_{K'}(p/1)$. If $f(T) = T_i'$ for some $i = 1, \ldots, n$, then $J_i' = K_1$ and $J_{i_1}' \# \ldots \# J_{i_{n-1}}' = K_2$, from which we conclude that $K = K'$. Our goal is thus to determine for how many slopes $p/1$ we may have $f(T) \neq T_i'$ for all $i = 1, \ldots, n$.

Suppose that this is the case. Then, without loss of generality, $f(T)$ is the boundary of a submanifold $S^3_{K_1}\subset S^3_{K'}$ and $K'$ has a summand $K_1'$ which is a satellite of $K_1$. Simultaneously, $K_2$ is a satellite of a summand $K_2'$ of $K' = K_1' \# K_2'$ where $K_1'$ or $K_2'$ is prime.

We consider each possibility for the outermost piece $Y$ of $S^3_{K_2}$, which is not the exterior of a torus knot since $K_2$ is a satellite knot.
Let $\mu_i$ and $\lambda_i$ be the meridian and longitude of $K_i, i=1,2$.

\subsection{Cable space}
Suppose $Y$ is an $(r,s)$-cable space, where $|s|\geq 2$ is the winding number of the cable.
We have that $f(Y)$ is homeomorphic to an $(r',s)$-cable space $V \subset S^3_{K_1'}$.
In the coordinates $\mu_1', \lambda_1'$ given by $f(S^3_{K_1})$,a regular fibre of $V$ along $f(T)$ has slope $r'\mu_1' + s\lambda_1'$. By the knot complement theorem, we have $f(\mu_1) = \mu_1'$ and $f(\lambda_1)=\lambda_1'$. Applying Lemma \ref{lemma:compositegluing}, this implies that $f ((rs-p)\mu_1 - \lambda_1) = (rs-p)\mu_1' - \lambda_1' = r'\mu_1' + s\lambda_1'$. Therefore, $s = -1$, a contradiction.

\subsection{Hyperbolic type}

Suppose $Y$ is hyperbolic. By Lemma \ref{lemma:compositegluing}, we have $\mu_1 = \mu_2$ and $\lambda_1 = -p\mu_2 - \lambda_2$ along $T$. Therefore, the slope $\lambda_1$ has length $l(-p\mu_2 - \lambda_2)$ along $T$ seen as a boundary component of the hyperbolic manifold $Y$. By Mostow-Prasad rigidity, we have that $f(\lambda_1)$ is a slope of length $l(-p\mu_2 - \lambda_2)$ along $f(T)$ seen as a boundary component of the hyperbolic manifold $f(Y)$. 

Let $M \cong S^3_{K_1} \cup Y'$ be a submanifold of $S^3_{K'}$, where $Y'\cong Y$ as manifolds, but their corresponding links in the satellite construction of $K'$ and $K$ may differ. The longitude of $K_1$ is identified to a slope of a specific length $L$ along a boundary component of $Y$. If $f(T) = S^3_{K_1} \cap Y' \subset M$, then $L = l(-p\mu_2 - \lambda_2)$ by the knot complement theorem and the above. Since $L$ and the geometry of $Y$ are fixed, this equality holds for at most two values of $p$, namely the integral roots of the degree 2 polynomial 
\[l(\mu_2)^2 p^2 -2l(\mu_2)l(\lambda_2)(\cos \theta) p+l(\lambda_2)^2-L^2,\]
where $\theta$ is the angle between $\lambda_2$ and $\mu_2$.

\subsection{Composing space}
Suppose $Y$ is a composing space. Then $K_2$ is a composite knot with a (possibly trivial) satellite $P(K_2')$ of $K_2'$ as a prime summand. The exterior of $K_1'$ contains the composing space $f(Y)$. 
If $f(Y)$ is the outermost piece of $S^3_{K_1'}$, then $K_1'$ is a composite knot whose prime summands consist of $K_1$ and the summands of $K_2$ except $P(K_2')$. Therefore, $K = K'$, a contradiction.

Hence, $K_1'$ is a satellite of a composite knot whose exterior has outermost piece $f(Y)$, and we may assume that $K_1'$ is prime. We thus find ourselves in the situation of the previous cases, with $K_1, K_2$ replaced by $K_2', K_1'$.

All cases considered, we obtain that the prime summand of $K'$ that contains $f(Y)$ must be of hyperbolic type. Therefore, an orientation-preserving homeomorphism $f: S^3_K(p/1) \rightarrow S^3_{K'}(p/1)$ may occur only when $K, K'$ respectively have hyperbolic type summands $J, J'$ with outermost pieces $Y, Y'$ such that $f(Y) \subset S^3_{J'}$ and $f\inv(Y') \subset S^3_{J}$.

In conclusion, for $K=J_1 \# J_2 \# \ldots \# J_m$ and $K=J_1' \# J_2' \# \ldots \# J_n'$, we may take $D(K,K')$ to be
\[2 \cdot \max_{\substack{i=1,\ldots, m \\ k=1,\ldots,n}} \left\{\# \{J_i \text{ hyperbolic type}\}, \# \{J_k' \text{ hyperbolic type} \} \right\}\]

This concludes the proof of Theorem \ref{thm:main}. \qed
\clearpage
\newgeometry{bottom=0.5cm}

\section{Summary}
\vspace{3em}
\renewcommand{\arraystretch}{2}

\begin{table}[h!]
	\rotatebox{90}{
		\begin{minipage}{8in}
        \vspace{-3em}
	\begin{tabular}{l||l|l|l|l}
		$K$ \textbackslash{} $K'$ & Torus & Hyperbolic type & Cable $C_{r',s'}(J')$ & Composite $J_1' \# \ldots \# J_n'$\\ \hline\hline
		Torus                     &    2   & $n(a(Y'), e(Y'))$             &       3    &     0      \\ \hline
		\renewcommand{\arraystretch}{1}\begin{tabular}{@{}l@{}}Hyperbolic\\ type\end{tabular}\renewcommand{\arraystretch}{2}                &       & \renewcommand{\arraystretch}{1}\begin{tabular}{@{}l@{}} $n\big(\max\{Q_\lambda(K,K'), S_\lambda(K,K')\}$,\\ $\max\{Q_\mu(K,K'), S_\mu(K,K')\}\big)$\end{tabular}\renewcommand{\arraystretch}{2}  &  \renewcommand{\arraystretch}{1}\begin{tabular}{@{}l@{}} $n \big(\max\{a(Y), \mathcal{S}_\lambda(K,K'),Q_\lambda(K,J')\}, $ \\ $ \max\{e(Y), \mathcal{S}_\mu(K,K'), Q_\mu(K,J')\} \big)$\end{tabular}\renewcommand{\arraystretch}{2}    &     $2 \cdot \lfloor \max \left\{a(Y), \mathcal{S}_\lambda(K,K')\right\}\rfloor$     \\ \hline
		\renewcommand{\arraystretch}{1}\begin{tabular}{@{}l@{}}Cable\\ $C_{r,s}(J)$\end{tabular}\renewcommand{\arraystretch}{2}             &       &            &   \begin{tabular}{@{}l@{}|l}
			$J$	& $4 \cdot \#T_{s'}(K) \cdot \#T_s(K') + 2 +$\\ \hline\hline
			Torus & 8 \\ \hline
			\renewcommand{\arraystretch}{1}\begin{tabular}{@{}l@{}}Hyperbolic\\ type\end{tabular}\renewcommand{\arraystretch}{2}  & \renewcommand{\arraystretch}{1}\begin{tabular}{@{}l@{}} $n \Big(\max\{S_\lambda(J,K'),Q_\lambda(J,J')\}, $ \\ $ \max\left\{\frac{S_\mu(J,K')}{s^2}, \frac{Q_\mu(J,J')}{\min \{s, s'\}^2} \right\} \Big)$\end{tabular}\renewcommand{\arraystretch}{2} \\ \hline
			Cable $C_{u,v}(\hat{J})$& $4 \cdot  \# T_{v}(J') + 2$ \\ \hline
			Composite & 0
		\end{tabular}
		    &    $2\cdot T_s(K') + 2$       \\ \hline
		\renewcommand{\arraystretch}{1}\begin{tabular}{@{}l@{}}Composite\\ $J_1 \# \ldots \# J_m$\end{tabular}\renewcommand{\arraystretch}{2}                 &       &            &       &   \renewcommand{\arraystretch}{1}\begin{tabular}{@{}l@{}}$2 \cdot \max \big\{\# \{J_i \text{ hyperbolic type}\},$ \\ $\# \{J_k' \text{ hyperbolic type} \} \big\}$\end{tabular}\renewcommand{\arraystretch}{2}     
	\end{tabular}
	\caption{$D(K,K')$ for each combination}\label{table:D}

	\vspace{2em}
	\begin{minipage}[t]{0.5\textwidth}
    	$n(B,C) = \# \big\{ (p,q) \in \Z_{\geq 0} \times \Z\setminus\{0\} \;\big|\; \gcd(p,q)=1,  p\leq B,  |q| \leq C \big\}$\\
            \\
        $Y$ (resp. $Y'$) is the outermost JSJ piece of $S^3_K$ (resp. $S^3_{K'}$)\\
        $a(Y) = 6 \cdot l(\lambda_Y)/A(\del S^3_K)$ \\
        $e(Y) = \begin{cases}
        \min\{8, \frac{6l(\mu_Y)}{A(T)}\} \;, &|\del Y| = 1;\\
        \min\{2, \frac{6l(\mu_Y)}{A(T)}\} \;, &|\del Y| > 1.
    \end{cases}$ \\
        \\
		$T_s(K')=\big\{ T_{a, b} \text{ torus knot} \:\big|\: S^3_{T_{a,b{}}} \subset S^3_{K'}, |s| \in \{|a|,|b|\} \big\}$\\
		\\
		$\mathbf{s}(K, K')=\max \left\{\mathbf{s}(X') \enspace\left| \enspace
        \begin{aligned}
            &X' \text{ a JSJ piece of } S^3_{K'} \\
            &X' \not\supset\del S^3_{K'} \\
            &|\del X'| = |\del Y|-1 
        \end{aligned}\right.\right\}$
	\end{minipage} 
	\hfill
	\begin{minipage}[t]{0.48\textwidth}
    $\mathbf{q}(X) = 
		\max\left\{{10.69},\sqrt{\frac{1.9793 \cdot 2 \pi}{\sys(X)} + 28.78} \right\}\\
		\mathbf{s}(X) =
		\max\left\{10.1, \sqrt{\frac{2\pi}{\sys(X)}+28.78}\right\}$\\
            \\
		$\mathcal{Q}_\mu(K,K') = \hat{l}(\mu_Y)\mathbf{q}(Y')$, \;
		$\mathcal{S}_\mu(K,K') = \hat{l}(\mu_Y)\mathbf{s}(K,K')$\\
		$\mathcal{Q}_\lambda(K,K') = \hat{l}(\lambda_Y)\mathbf{q}(Y')$, \; $\mathcal{S}_\lambda(K,K') = \hat{l}(\lambda_Y)\mathbf{s}(K,K')$\\
            \\
        $Q_\mu(K,K') = \max\Big\{e(Y), e(Y'), \min\{\mathcal{Q}_\mu(K,K'), \mathcal{Q}_\mu(K',K)\} \Big\}$\\
		$S_\mu(K,K') = \max\Big\{e(Y), e(Y'), \min\{	\mathcal{S}_\mu(K,K'),	\mathcal{S}_\mu(K',K)\Big\}$ \\
		$Q_\lambda(K, K') = \max \Big\{a(Y), a(Y'), \: \min\{ \mathcal{Q}_\lambda(K,K') , \mathcal{Q}_\lambda(K',K) \} \Big\}$ \\
		$S_\lambda(K, K') = \max \Big\{a(Y), a(Y'), \: \min\{ \mathcal{S}_\lambda(K,K') , \mathcal{S}_\lambda(K',K) \} \Big\}$
	\end{minipage}

\end{minipage}
}

\end{table}	
\restoregeometry

\clearpage



\bibliographystyle{amsalpha} 
\bibliography{bib}

@misc{bkm,
      title={The search for alternating surgeries}, 
      author={Kenneth L. Baker and Marc Kegel and Duncan McCoy},
      year={2026},
      eprint={2409.09842},
      archivePrefix={arXiv},
      primaryClass={math.GT},
      url={https://arxiv.org/abs/2409.09842}, 
}

@misc{kp,
      title={Knots that share four surgeries}, 
      author={Marc Kegel and Lisa Piccirillo},
      year={2025},
      eprint={2505.13168},
      archivePrefix={arXiv},
      primaryClass={math.GT},
      url={https://arxiv.org/abs/2505.13168}, 
}

@article {sw,
    AUTHOR = {Sorya, Patricia and Wakelin, Laura},
     TITLE = {Effective bounds on characterising slopes for all knots},
   JOURNAL = {Trans. Amer. Math. Soc.},
      YEAR = {to appear, 2026},
       DOI = {10.1090/tran/9673},
       URL = {https://doi.org/10.1090/tran/9673},
}

@article {sorya,
    AUTHOR = {Sorya, Patricia},
     TITLE = {Characterizing slopes for satellite knots},
   JOURNAL = {Adv. Math.},
  FJOURNAL = {Advances in Mathematics},
    VOLUME = {450},
      YEAR = {2024},
     PAGES = {109746},
      ISSN = {0001-8708,1090-2082},
   MRCLASS = {57K10},
  MRNUMBER = {4755445},
       DOI = {10.1016/j.aim.2024.109746},
       URL = {https://doi.org/10.1016/j.aim.2024.109746},
}

@article {wakelin,
	AUTHOR = {Wakelin, Laura},
	TITLE = {Characterising slopes for hyperbolic knots and {W}hitehead
	doubles},
	JOURNAL = {Algebr. Geom. Topol.},
	FJOURNAL = {Algebraic \& Geometric Topology},
	VOLUME = {26},
	YEAR = {2026},
	NUMBER = {2},
	PAGES = {625--657},
	ISSN = {1472-2747,1472-2739},
	MRCLASS = {57K10 (57K32)},
	MRNUMBER = {5034310},
	DOI = {10.2140/agt.2026.26.625},
	URL = {https://doi.org/10.2140/agt.2026.26.625},
}

@article{agol,
	AUTHOR = {Agol, Ian},
	TITLE = {Bounds on exceptional {D}ehn filling},
	JOURNAL = {Geom. Topol.},
	FJOURNAL = {Geometry and Topology},
	VOLUME = {4},
	YEAR = {2000},
	PAGES = {431--449},
	ISSN = {1465-3060,1364-0380},
	MRCLASS = {57M50 (57M25 57M27 57S25)},
	MRNUMBER = {1799796},
	MRREVIEWER = {Danny\ C.\ Calegari},
	DOI = {10.2140/gt.2000.4.431},
	URL = {https://doi.org/10.2140/gt.2000.4.431},
}

@article {lackenby_6,
	AUTHOR = {Lackenby, Marc},
	TITLE = {Word hyperbolic {D}ehn surgery},
	JOURNAL = {Invent. Math.},
	FJOURNAL = {Inventiones Mathematicae},
	VOLUME = {140},
	YEAR = {2000},
	NUMBER = {2},
	PAGES = {243--282},
	ISSN = {0020-9910,1432-1297},
	MRCLASS = {57M07 (20F65 20F67 57M05 57N10)},
	MRNUMBER = {1756996},
	MRREVIEWER = {William\ H.\ Jaco},
	DOI = {10.1007/s002220000047},
	URL = {https://doi.org/10.1007/s002220000047},
}

@article {lackenby,
	AUTHOR = {Lackenby, Marc},
	TITLE = {Every knot has characterising slopes},
	JOURNAL = {Math. Ann.},
	FJOURNAL = {Mathematische Annalen},
	VOLUME = {374},
	YEAR = {2019},
	NUMBER = {1-2},
	PAGES = {429--446},
	ISSN = {0025-5831,1432-1807},
	MRCLASS = {57M25},
	MRNUMBER = {3961316},
	MRREVIEWER = {Brandy\ Guntel\ Doleshal},
	DOI = {10.1007/s00208-018-1757-x},
	URL = {https://doi.org/10.1007/s00208-018-1757-x},
}

@article {mccoy,
	AUTHOR = {McCoy, Duncan},
	TITLE = {Non-integer characterizing slopes for torus knots},
	JOURNAL = {Comm. Anal. Geom.},
	FJOURNAL = {Communications in Analysis and Geometry},
	VOLUME = {28},
	YEAR = {2020},
	NUMBER = {7},
	PAGES = {1647--1682},
	ISSN = {1019-8385,1944-9992},
	MRCLASS = {57K10},
	MRNUMBER = {4184829},
	MRREVIEWER = {Ana\ G.\ Lecuona},
	DOI = {10.4310/CAG.2020.v28.n7.a5},
	URL = {https://doi.org/10.4310/CAG.2020.v28.n7.a5},
}

@article {budney,
	AUTHOR = {Budney, Ryan},
	TITLE = {J{SJ}-decompositions of knot and link complements in {$S^3$}},
	JOURNAL = {Enseign. Math. (2)},
	FJOURNAL = {L'Enseignement Math\'ematique. Revue Internationale. 2e
	S\'erie},
	VOLUME = {52},
	YEAR = {2006},
	NUMBER = {3-4},
	PAGES = {319--359},
	ISSN = {0013-8584},
	MRCLASS = {57M25 (57N10)},
	MRNUMBER = {2300613},
	MRREVIEWER = {Patrick\ Popescu-Pampu},
}

@article {fps,
	AUTHOR = {Futer, David and Purcell, Jessica S. and Schleimer, Saul},
	TITLE = {Effective bilipschitz bounds on drilling and filling},
	JOURNAL = {Geom. Topol.},
	FJOURNAL = {Geometry \& Topology},
	VOLUME = {26},
	YEAR = {2022},
	NUMBER = {3},
	PAGES = {1077--1188},
	ISSN = {1465-3060},
	MRCLASS = {57K10 (30F40 57K32)},
	MRNUMBER = {4466646},
	DOI = {10.2140/gt.2022.26.1077},
	URL = {https://doi.org/10.2140/gt.2022.26.1077},
}

@article {moser,
	AUTHOR = {Moser, Louise},
	TITLE = {Elementary surgery along a torus knot},
	JOURNAL = {Pacific J. Math.},
	FJOURNAL = {Pacific Journal of Mathematics},
	VOLUME = {38},
	YEAR = {1971},
	PAGES = {737--745},
	ISSN = {0030-8730,1945-5844},
	MRCLASS = {55F55 (57C45)},
	MRNUMBER = {383406},
	MRREVIEWER = {Wilbur\ Whitten},
	URL = {http://projecteuclid.org/euclid.pjm/1102969920},
}

@article {gordon_sat,
    AUTHOR = {Gordon, C. McA.},
     TITLE = {Dehn surgery and satellite knots},
   JOURNAL = {Trans. Amer. Math. Soc.},
  FJOURNAL = {Transactions of the American Mathematical Society},
    VOLUME = {275},
      YEAR = {1983},
    NUMBER = {2},
     PAGES = {687--708},
      ISSN = {0002-9947,1088-6850},
   MRCLASS = {57M25 (57N10)},
  MRNUMBER = {682725},
MRREVIEWER = {R.\ C.\ Kirby},
       DOI = {10.2307/1999046},
       URL = {https://doi.org/10.2307/1999046},
}

@article {kmos,
    AUTHOR = {Kronheimer, P. and Mrowka, T. and Ozsv\'ath, P. and Szab\'o,
              Z.},
     TITLE = {Monopoles and lens space surgeries},
   JOURNAL = {Ann. of Math. (2)},
  FJOURNAL = {Annals of Mathematics. Second Series},
    VOLUME = {165},
      YEAR = {2007},
    NUMBER = {2},
     PAGES = {457--546},
      ISSN = {0003-486X,1939-8980},
   MRCLASS = {57R58 (57M27 57R57)},
  MRNUMBER = {2299739},
MRREVIEWER = {Vicente\ Mu\~noz},
       DOI = {10.4007/annals.2007.165.457},
       URL = {https://doi.org/10.4007/annals.2007.165.457},
}

@article {lm,
    AUTHOR = {Lackenby, Marc and Meyerhoff, Robert},
     TITLE = {The maximal number of exceptional {D}ehn surgeries},
   JOURNAL = {Invent. Math.},
  FJOURNAL = {Inventiones Mathematicae},
    VOLUME = {191},
      YEAR = {2013},
    NUMBER = {2},
     PAGES = {341--382},
      ISSN = {0020-9910,1432-1297},
   MRCLASS = {57R65},
  MRNUMBER = {3010379},
MRREVIEWER = {Yu\ Zhang},
       DOI = {10.1007/s00222-012-0395-2},
       URL = {https://doi.org/10.1007/s00222-012-0395-2},
}

\end{document}